\newtheorem{theorem}{Theorem}
\newtheorem{lemma}{Lemma}
\newtheorem{corollary}{Corollary}
\newtheorem{proposition}{Proposition}
\newtheorem{remark}{Remark}
\DeclareMathOperator{\cone}{cone}
\DeclareMathOperator{\intr}{int}
\DeclareMathOperator{\spa}{sp}
\DeclareMathOperator{\diag}{diag}
\newcommand{\lng}{\langle}
\newcommand{\rng}{\rangle}
\newcommand{\R}{\mathbb R}
\newcommand{\N}{\mathbb N}
\newcommand{\p}{\partial}
\newcommand{\mc}{\mathcal}
\newenvironment{proof}{{\noindent\bf Proof.}}{\hfill$\Box$\\}
\begin{document}

\title{Order isotonicity of the metric projection onto a closed convex cone
\thanks{{\it 1991 A M S Subject Classification:} Primary 90C33,
Secondary 15A48; {\it Key words and phrases:} convex cones, isotone projections. }
}
\author{A. B. N\'emeth\\Faculty of Mathematics and Computer Science\\Babe\c s Bolyai University, Str. Kog\u alniceanu nr. 1-3\\RO-400084 Cluj-Napoca, Romania\\email: nemab@math.ubbcluj.ro \and S. Z. N\'emeth\\School of Mathematics, The University of Birmingham\\The Watson Building, Edgbaston\\Birmingham B15 2TT, United Kingdom\\email: nemeths@for.mat.bham.ac.uk}
\date{}
\maketitle

\begin{abstract}
The basic tool for solving problems in metric geometry and isotonic regression
is the metric projection onto closed convex cones. Isotonicity of these
projections with respect to a given order relation can facilitate finding the solutions of the above problems.
In the recent note \cite{NemethNemeth2016} this problem was studied for the coordinate-wise ordering. 
This study was the starting point for further investigations, such as the ones presented here.
The order relation in the Euclidean space endowed by a proper cone is considered
and the proper cones admitting isotone metric projections with respect to this order relation are investigated.

\end{abstract}

\section{Introduction}

Let $\R^m$ be the $m$-dimensional Euclidean space endowed with the standard inner 
product $\lng\cdot,\cdot\rng:\R^m\times\R^m\to\R$ 
and the Euclidean norm $\|\cdot\|$ together with the topology this scalar product defines.

Denote by $P_D$ 
the projection mapping onto a nonempty closed convex set $D\subset \R^m,$ 
that is the mapping which associates
to $x\in \R^m$ the unique nearest point $P_Dx$ of $x$ in $D$ \cite{Zarantonello1971}:

\[ P_Dx\in D,\;\; \textrm{and}\;\; \|x-P_Dx\|= \inf \{\|x-y\|: \;y\in D\}. \]

Given an order relation $\preceq$ in $\R^m$, the closed convex set is
called an \emph{isotone projection set} if from $x\preceq y,\;x,\,y\in \R^m$,
it follows that $P_Dx\preceq P_Dy$.

\emph{Due to the importance of the projection operator in applications, it is
desirable to get a user friendly order relation for which the class
of the isotone projection sets is as large as possible.}

In this regard the interest is focused onto the widely used \emph{vectorial ordering}, because of its natural
connection to the vector-space structure of the Euclidean space $\R^n$ . It is usually endowed by a
cone $K$ and is denoted by $\leq_K$. (See the detailed explanation for the terms which are not defined here in the next section.)

If $\preceq =\leq_K$ for some cone $K$, then the isotone projection set $D$ is
called \emph{$K$-isotone}.

The investigations concerning the isotonicity with respect to
the order relation induced by a cone of the metric projection
onto a convex set go back to the paper
\cite{IsacNemeth1986} of G. Isac and A. B. N\'emeth, where 
the isotone projection cones (i.e., generating pointed closed convex 
cones $K$ admitting a $\leq_K$ isotone projection onto themselves) are characterized. 
The same authors \cite{IsacNemeth1990} and S. J. Bernau
\cite{Bernau1991} considered the similar problem for Hilbert spaces. In these papers
and in the applications in 
\cite{IsacNemeth1990b}, \cite{IsacNemeth2008c},
\cite{Nemeth2009a} (for the problem of solving nonlinear complementarity problems) the
ordering is defined by isotone projection cones. 

The next step is to get the family of closed convex sets which admit isotonic projection
with respect to a given ordering. In $\R^m$ with a given Cartesian reference system and
the coordinate-wise order relation the problem was settled in
\cite{Isac1996},
 \cite{NishimuraOk2012}, \cite{NemethNemeth2013}. If the ordering is induced by the Lorentz cone,
or ice cream cone it was settled in \cite{NemethNemeth2013}. 
The machinery which permits advances in this direction is to reduce
the general problem to isotone projection onto subspaces. It was developed
in \cite{NemethNemeth2013} as well as in \cite{NemethNemeth2012a}.

In important applications to metric geometry \cite{Dattorro2005} and regression theory
\cite{Kruskal1964,BarlowBartholomewBremnerBrunk1972,RobertsonWrightDykstra1988,BestChakravarti1990,WuWoodroofeMentz2001,DeLeeuwHornikMair2009,ShivelySagerWalker2009} the convex sets onto which the metric projection is considered
are closed convex cones. The papers \cite{NemethNemeth2009} and \cite{GuyaderJegouNemethNemeth2014}
exploited the fact that the totally ordered isotonic regression cone is an isotone projection cone too.

However, it is a very strong condition for a cone to be an isotone projection one. We would expect that considering order relations endowed
by more general cones may be useful in applications. In this regard we
consider in the present note the following particular case of the problem emphasized
at the beginning of our introduction:

{\bf Problem:} \emph{Given a proper cone $K$ we seek another proper cone $L$
with the property that $P_K$ is $L$-isotone.}

In our recent note \cite{NemethNemeth2016} the family
of closed convex cones admitting isotonic metric projections
with respect to the coordinate-wise ordering was determined. Every isotonic regression
cone belongs to this class. These results serve as justification and starting point
for the other theoretical results contained in the present note.

Our investigations rely on the results in \cite{NemethNemeth2013} as well as
\cite{NemethNemeth2012a}.

\section{The used terminology} \label{termin}

We aspire to be in line with the standard terminology from convex geometry. 
(see e.g. \cite{Rockafellar1970}). 

The non-empty set $K\subset \R^m$ is called a \emph{convex cone} if
(i) $K+K\subset K$ and (ii) $tK\subset K,\;\forall \;t\in \R_+ =[0,+\infty)$. All the cones used 
in this paper are convex.
The convex cone $K$ is called \emph{pointed}, if $(-K)\cap K=\{0\}.$

The convex cone $K$ is called {\it generating} if $K-K=\R^m$.

A generating closed convex pointed cone is called \emph{proper cone}.
 
For any $x,y\in \R^m$, by the equivalence $x\leq_K y\Leftrightarrow y-x\in K$, the 
convex cone $K$ 
induces an {\it order relation} $\leq_K$ in $\R^m$, that is, a binary relation, which is 
reflexive and transitive. This order relation is {\it translation invariant} 
in the sense that $x\leq_K y$ implies $x+z\leq_K y+z$ for all $z\in \R^m$, and 
{\it scale invariant} in the sense that $x\leq_Ky$ implies $tx\leq_K ty$ for any $t\in \R_+$.
If $\leq$ is a translation invariant and scale invariant order relation on $\R^m$, then 
$\leq=\leq_K$ with $K=\{x\in\R^m:0\leq x\}.$ The vector space $\R^m$
endowed with the relation $\leq_K$ is denoted by $(\R^m,K)$ and is called an \emph{ordered
Euclidean vector space}. In accordance, $\leq_K$ is called a \emph{vectorial ordering}. If $K$ is pointed, then $\leq_K$ is 
\emph{antisymmetric} too, that is $x\leq_K y$ and $y\leq_K x$ imply that $x=y.$

The set
$$ K= \cone\{x_1,\dots,x_m\}:=\{t^1x_1+\dots+t^m x_m:\;t^i\in \R_+,\;i=1,\dots,m\}$$
with $x_1,\,\dots,\,x_m$ linearly independent vectors is called a \emph{simplicial cone}.
A simplicial cone is proper.

The \emph{dual} of the convex cone $K$ is the set
$$K^*:=\{y\in \R^n:\;\lng x,y\rng \geq 0,\;\forall \;x\in K\}.$$
The dual of a convex cone is a closed convex cone.

A convex cone $K$ is called \emph{subdual} if $K\subset K^*$ and
it is called \emph{self-dual}, if $K=K^*.$ If $K$
is self-dual, then it is proper.

Suppose that $\R^m$ is endowed with a
Cartesian system. Let
$x,y\in \R^m$, $x=(x^1,...,x^m)$, $y=(y^1,...,y^m)$, where $x^i$, $y^i$ are the coordinates of
$x$ and $y$, respectively with respect to the Cartesian system. Then, the scalar product of $x$
and $y$ is the sum
$\lng x,y\rng =\sum_{i=1}^m x^iy^i.$

The set
\[\R^m_+=\{x=(x^1,...,x^m)\in \R^m:\; x^i\geq 0,\;i=1,...,m\}\]
is called the \emph{nonnegative orthant} of the above introduced Cartesian
system. It is a simplicial cone. A direct verification shows that $\R^m_+$ is a
self-dual cone.

Taking a Cartesian system in $\R^m$ and using the above introduced
notations,
the 
\emph{coordinatewise order}  $\leq$ in $\R^m$ is defined by
\[x=(x^1,...,x^m)\leq y=(y^1,...,y^m)\;\Leftrightarrow\;x^i\leq y^i,\;i=1,...,m.\]
By using the notion of the order relation induced by a cone, defined above, 
it is easy to see that $\leq =\leq_{\R^m_+}$.

A \emph{hyperplane} (through $a\in\R^m$) is a set of form
\begin{equation}\label{hyperplane}
	H(u,a)=\{x\in \R^m:\;\lng u,x\rng =\lng u,a\rng\},\;\;u\not= 0.
\end{equation}

A hyperplane $H(u,a)$ determines two \emph{closed halfspaces} $H_-(a,u)$ and
$H_+(u,a)$  of $\R^m$, defined by

\[H_-(u,a)=\{x\in \R^m:\; \lng u,x\rng \leq \lng u,a\rng\},\]
and
\[H_+(u,a)=\{x\in \R^m:\; \lng u,x\rng \geq \lng u,a\rng\}.\]

The hyperplane $H(u,0)$ is a \emph{supporting hyperplane} to the cone
$K$ if $K\subset H_-(u,0)$.

The proper cone $K$ is said \emph{strictly convex} if the dimension
$\dim(K\cap H(u,0))$ is at most $1$ for each supporting hyperplane of $K$.
The strictly convex proper cone $K$ is called also \emph{smooth} if
through each its boundary point $x\not=0$ there exist exactly one
supporting hyperplane to $K$.

The following auxiliary results are consequences of standard
reasonings in convex geometry (see e. g. \cite{Rockafellar1970} and \cite{Zarantonello1971}).

\begin{lemma}\label{intersec}
Let $K$ be a strictly convex proper cone and $L$ be a proper cone.
If $\intr(K)\cap L=\varnothing$, then $\dim (K\cap L)\leq 1$.
\end{lemma}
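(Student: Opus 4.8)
The plan is to produce a supporting hyperplane of $K$ through the origin that also bounds $L$, and then to squeeze $K\cap L$ into that hyperplane so that strict convexity finishes the job. Since $K$ is proper it is generating, hence full-dimensional, so $\intr(K)\neq\varnothing$. The hypothesis says that the open convex set $\intr(K)$ and the convex set $L$ are disjoint, so the separation theorem for an open convex set and a convex set yields a nonzero $u$ and a scalar $c$ with $\lng u,x\rng<c\leq\lng u,y\rng$ for all $x\in\intr(K)$ and all $y\in L$.

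Next I would exploit that both sets are cones in order to move the separating hyperplane to the origin. Because $0\in L$ we get $c\leq 0$, and because $L$ is a cone the inequality $t\lng u,y\rng\geq c$ for every $t>0$ forces $\lng u,y\rng\geq 0$ for all $y\in L$; thus $L\subset H_+(u,0)$. On the other side, $\lng u,x\rng<c\leq 0$ for $x\in\intr(K)$, and passing to the closure (using $K=\overline{\intr(K)}$, valid since $K$ is a closed convex set with nonempty interior) gives $\lng u,x\rng\leq 0$ for all $x\in K$, i.e. $K\subset H_-(u,0)$. Hence $H(u,0)$ is a supporting hyperplane of $K$.

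Finally, any $z\in K\cap L$ satisfies $\lng u,z\rng\leq 0$ (from $z\in K$) and $\lng u,z\rng\geq 0$ (from $z\in L$), so $\lng u,z\rng=0$ and $z\in H(u,0)$. Therefore $K\cap L\subset K\cap H(u,0)$, and since $H(u,0)$ supports the strictly convex cone $K$, its defining property gives $\dim(K\cap H(u,0))\leq 1$, whence $\dim(K\cap L)\leq 1$. The one delicate point is the second step: guaranteeing that the separating hyperplane can be chosen through the origin. This is where the cone structure of both $K$ and $L$ is essential, so it is the part I would write out most carefully; the separation itself and the closure argument are routine.
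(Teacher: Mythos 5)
Your proof is correct. The paper offers no explicit proof of this lemma---it is stated as a consequence of ``standard reasonings in convex geometry''---and your argument (separating the open set $\intr(K)$ from $L$, using the cone structure of both sets and $0\in L$ to move the separating hyperplane to the origin, then applying the definition of strict convexity to $K\cap H(u,0)\supset K\cap L$) is precisely that standard reasoning, carried out with the delicate homogenization step handled correctly.
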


\begin{lemma}\label{projsc}
If $K$ is a smooth strictly convex proper cone and $H(u,0)$ is supporting
hyperplane to $K$ through a boundary point
point $x\not= 0$ of $K$, then $P^{-1}_K(K\cap H(u,0))= \spa \{x,u\}$
where $\spa M$ stands for the linear span of the set $M$.
Thus the set of points which projects by $P_K$ on the ray on
the boundary of $K$ engendered by $x$ is a two-dimensional subspace.
\end{lemma}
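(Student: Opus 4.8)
The plan is to describe $P_K^{-1}(K\cap H(u,0))$ through the variational characterization of the projection and the normal cones of $K$, feeding the two hypotheses in at exactly the points where they are needed. First I would make two reductions. Since $\lng u,x\rng=0$ with $u\neq 0$ and $x\neq 0$, the vectors $x$ and $u$ are linearly independent, so $\spa\{x,u\}$ is genuinely two-dimensional. By the definition of strict convexity, $\dim(K\cap H(u,0))\le 1$; as this intersection is a pointed cone lying in a line and containing $x$, it is exactly the boundary ray $\{tx:\,t\ge 0\}$. Thus the set to be identified is the preimage of this ray.

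Next I would recall the machinery. For projection onto a closed convex set, $P_Kz=p$ holds precisely when $p\in K$ and $\lng z-p,\,y-p\rng\le 0$ for all $y\in K$, i.e. $z-p$ lies in the normal cone $N_K(p)$. For a cone two standard simplifications apply: $N_K(p)=\{w\in -K^*:\,\lng w,p\rng=0\}$, and $N_K(tp)=N_K(p)$ for $t>0$. Hence $P_K^{-1}(tx)=tx+N_K(x)$ for each $t>0$, and everything reduces to computing $N_K(x)$.

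The heart of the argument, and the step I expect to be the main obstacle, is the identity $N_K(x)=\{tu:\,t\ge 0\}$. The inclusion $\supseteq$ is free: the support condition $K\subset H_-(u,0)$ says $u\in -K^*$, and $\lng u,x\rng=0$, so $u\in N_K(x)$. For $\subseteq$ I would invoke smoothness. Any $w\in N_K(x)\setminus\{0\}$ satisfies $w\in -K^*$ and $\lng w,x\rng=0$, which says exactly that $H(w,0)$ is a supporting hyperplane of $K$ through the boundary point $x$. Smoothness makes this hyperplane unique, so $H(w,0)=H(u,0)$, forcing $w\in\R u$; since $w$ and $u$ lie in $-K^*$ on the same side, $w\in\{tu:\,t\ge0\}$. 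Note that strict convexity was what guaranteed in advance that the intersection is a single ray, while smoothness is what collapses the normal cone to a single ray, so both hypotheses are genuinely used.

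Assembling this, for $t>0$ one gets $P_K^{-1}(tx)=\{tx+su:\,s\ge 0\}$; the reverse verification is immediate and needs no smoothness, since for $z=tx+su$ with $t>0,\,s\ge 0$ and any $y\in K$ one has $\lng z-tx,\,y-tx\rng=s\lng u,y\rng\le 0$, so $P_Kz=tx$. Taking the union over $t>0$ shows that the points projecting onto the open boundary ray form the planar sector $\{tx+su:\,t>0,\,s\ge 0\}$, which lies in and spans the two-dimensional subspace $\spa\{x,u\}$. The one point demanding care is the apex: because $P_K^{-1}(0)=-K^*$ is full-dimensional, the identification with $\spa\{x,u\}$ is to be understood along the relatively open ray $\{tx:\,t>0\}$.
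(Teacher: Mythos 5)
Your proposal cannot be compared against a proof in the paper, because the paper gives none: Lemma~\ref{projsc} is stated as a ``consequence of standard reasonings in convex geometry'' with a pointer to Rockafellar and Zarantonello. Your normal-cone argument is exactly the standard reasoning being invoked, and it is carried out correctly: strict convexity identifies $K\cap H(u,0)$ with the ray of $x$; smoothness plus pointedness of $K^*$ collapse $N_K(x)$ to the ray of $u$; and $P_K^{-1}(tx)=tx+N_K(x)$ for $t>0$ finishes the computation.

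The more valuable part of your write-up is what it exposes: carried out honestly, the computation shows that the equality asserted in the lemma is false, and your caveat about the apex is the right observation, though it can be sharpened. What you actually prove is that the preimage of the punctured ray $\{tx:t>0\}$ is the sector $\{tx+su:\,t>0,\,s\ge 0\}$, which spans but does not equal $\spa\{x,u\}$; for instance, in the ice cream cone with $x=(1,0,1)$ and $u=(1,0,-1)$, the point $(0,0,1)=\frac{1}{2}(x-u)\in\spa\{x,u\}$ is its own projection and lies nowhere on the ray, so even along the open ray the identification with the subspace fails, not only at the apex. And the preimage of the full set $K\cap H(u,0)$, which contains $0$, also contains $P_K^{-1}(0)=-K^*$, a full-dimensional cone that no plane can hold. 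So neither inclusion in the stated equality is valid; the correct statement is the containment of the preimage of the punctured ray in the two-dimensional subspace $\spa\{x,u\}$. This corrected form is what the paper actually needs in the proof of Proposition~\ref{plorno}, where $L\subset P_K^{-1}(P_K(L))$ is claimed to lie in a two-dimensional subspace --- and even there one must separately handle the points of $L$ that project to $0$, which lie in $-K^*$ rather than in the plane (this can be patched using convexity of $L$, but the paper does not do it). In short: your proof is correct, it supplies the argument the paper omits, and it rightly flags that the lemma as printed needs to be restated.
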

An example for a smooth strictly convex proper cone
is the so called Lorentz or ice cream cone:

The \emph{Lorentz or ice cream cone} $L\subset\R^m\times\R$ is defined by $$L=\{(x,t)\in\R^m\times\R:t\ge\|x\|\}.$$ 
It is a self-dual, smooth strictly convex cone. Other examples of self-dual, smooth, strictly convex cones
can be found in \cite{Iochum1984}.


\section{Preliminary results}

We will use in the following proofs the following simplified form of Moreau's decomposition theorem \cite{Moreau1962}:

\begin{theorem}\label{moreau}
        Let $K$ be a closed convex cone in $\R^m$ and $K^*$ its dual. For any $x$ in $\R^m$ we 
	have $x=P_Kx-P_{K^*}(-x)$ and $\lng P_Kx,P_{K^*}(-x)\rng=0$. The relation $P_Kx=0$ 
	holds if and only if $x\in -K^*$.
\end{theorem}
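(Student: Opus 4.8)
The plan is to deduce everything from the obtuse-angle (variational) characterization of the metric projection onto a closed convex set, which for a closed convex cone collapses into a particularly clean form. Recall that for a nonempty closed convex $D$ the point $p=P_Dx$ is characterized by $p\in D$ together with $\lng x-p,\,y-p\rng\leq 0$ for every $y\in D$. I would first specialize this to $D=K$, a closed convex cone, and isolate a symmetric, self-dual set of conditions.

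Exploiting the cone structure, I would test the variational inequality at the two admissible points $y=0\in K$ and $y=2p\in K$. The first gives $\lng x-p,p\rng\geq 0$ and the second gives $\lng x-p,p\rng\leq 0$, so that $\lng x-p,p\rng=0$; substituting this back leaves $\lng x-p,y\rng\leq 0$ for all $y\in K$, i.e. $p-x\in K^*$. Conversely, these three conditions are sufficient, since $\lng x-p,y-p\rng=\lng x-p,y\rng-\lng x-p,p\rng=-\lng p-x,y\rng\leq 0$ whenever $p-x\in K^*$ and $y\in K$. Thus I obtain the working characterization: $p=P_Kx$ if and only if $p\in K$, $\ p-x\in K^*$, and $\lng p,\,p-x\rng=0$.

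The heart of the argument is a symmetry observation. Put $q=p-x$, so that $q\in K^*$ and $\lng p,q\rng=0$ by the above. I would then verify the analogous three conditions for the projection of $-x$ onto $K^*$: membership $q\in K^*$ is already known; the orthogonality $\lng q,\,q-(-x)\rng=\lng q,p\rng=0$ follows from $\lng p,q\rng=0$; and the dual-membership condition $q-(-x)=p\in K^{**}$ holds because $K^{**}=K$ for a closed convex cone. Hence $q=P_{K^*}(-x)$, which is exactly $P_{K^*}(-x)=P_Kx-x$, i.e. $x=P_Kx-P_{K^*}(-x)$, and the orthogonality $\lng P_Kx,P_{K^*}(-x)\rng=\lng p,q\rng=0$ is immediate. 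The main obstacle is this symmetric step: one must invoke closedness of $K$ through the bipolar theorem $K^{**}=K$, and keep the sign convention of the dual ($K^*=\{y:\lng x,y\rng\geq 0,\ \forall\,x\in K\}$) straight throughout, since a sign slip would turn $-x$ into $x$ and break the decomposition.

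Finally, for the last assertion I would read off $P_Kx=0$ directly from the variational inequality at the origin: $P_Kx=0$ means $0$ is the nearest point of $K$ to $x$, equivalently $\lng x,y\rng\leq 0$ for all $y\in K$, which says precisely $-x\in K^*$, i.e. $x\in -K^*$. Equivalently, substituting $P_Kx=0$ into the decomposition gives $x=-P_{K^*}(-x)\in -K^*$, while conversely $x\in -K^*$ forces $-x\in K^*$, hence $P_{K^*}(-x)=-x$ and therefore $P_Kx=x+P_{K^*}(-x)=0$.
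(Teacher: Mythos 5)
Your proof is correct, but a comparison with ``the paper's proof'' is vacuous here: the paper does not prove this statement at all. It is quoted as a known result (a simplified form of Moreau's decomposition theorem, with a citation to Moreau's 1962 note), and the authors use it as a black box in the proofs of Theorem \ref{pc} and Proposition \ref{plorno}. What you have supplied is essentially the classical proof of Moreau's theorem: you derive the three-condition characterization $p=P_Kx \iff p\in K,\ p-x\in K^*,\ \lng p,p-x\rng=0$ from the obtuse-angle variational inequality (testing at $y=0$ and $y=2p$, which is exactly the right trick for cones), then exploit the symmetry of these conditions under $(K,x,p)\mapsto(K^*,-x,p-x)$, invoking the bipolar identity $K^{**}=K$ for closed convex cones to close the loop. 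All steps check out, including the sign conventions with the paper's positive dual $K^*=\{y:\lng x,y\rng\ge 0,\ \forall x\in K\}$, and your two independent arguments for the final equivalence ($P_Kx=0\iff x\in-K^*$) are both valid. The one hypothesis you use implicitly and could flag explicitly is that $K^*$ is itself a nonempty closed convex cone (so that the characterization applies to it); the paper records this fact in Section \ref{termin}, so nothing is missing in substance. Your writeup buys the reader a self-contained argument where the paper relies on an external reference.
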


One of the basic tools in our proofs is the following result
which can be derived from Theorem 1, Theorem 2 and Lemma 4 in \cite{NemethNemeth2013}:

\begin{theorem}\label{elso}
A closed convex set $C\subset\R^m$ with nonempty interior is $K$-isotone
if and only if it can be represented in the form
\begin{equation}\label{vegso}
C=\cap_{i\in \N} H_-(u_i,a_i),
\end{equation}
where each hyperplane $H(u_i,a_i)$ is tangent to $C$ and is $K$-isotone.
\end{theorem}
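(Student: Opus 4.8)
The plan is to prove both implications by localizing the isotonicity of $P_C$ at the boundary of $C$ and transferring it to the supporting hyperplanes, using two elementary equivariances of the metric projection: $P_{C+z}(x+z)=P_Cx+z$ and $P_{tC}(tx)=tP_Cx$ for $t>0$. Combined with the translation and scale invariance of $\leq_K$ recorded in Section~\ref{termin}, these show at once that $K$-isotonicity of a closed convex set is preserved under the translations $C\mapsto C+z$ and the positive homotheties $C\mapsto tC$. I would also invoke the standard fact that a convex body equals the intersection of all its supporting halfspaces, and that, $\R^m$ being second countable, a suitable countable subfamily already has intersection $C$; this accounts for the index set $\N$ in \eqref{vegso}.

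For necessity, assume $P_C$ is $K$-isotone and fix a smooth boundary point $x_0\in\p C$ with unique outer unit normal $u$, so that the supporting hyperplane there is $H(u,x_0)$. I would blow $C$ up at $x_0$ through the sets $C_t=\tfrac1t(C-x_0)$, each of which is $K$-isotone by the invariances above. As $t\to 0^+$ the sets $C_t$ converge, in the Kuratowski--Painlev\'e (equivalently, in finite dimension, Mosco) sense, to the tangent cone of $C$ at $x_0$, which for a smooth point is exactly the halfspace $H_-(u,0)$. Since the metric projection is continuous with respect to this convergence, $P_{C_t}\to P_{H_-(u,0)}$ pointwise, and since $K$ is closed the relation $P_{C_t}x\leq_K P_{C_t}y$ survives in the limit; hence $H_-(u,0)$, and after translating back the tangent halfspace $H_-(u,x_0)$, is $K$-isotone. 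Smooth boundary points are dense in $\p C$ and their supporting halfspaces already cut out $C$, so, together with the closedness of $K$-isotonicity under the above limits and the equivalence (valid for the supporting data of a convex body) between the $K$-isotonicity of $H_-(u,x_0)$ and of its hyperplane $H(u,x_0)$, this delivers the representation \eqref{vegso} with every hyperplane $K$-isotone.

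The sufficiency is the delicate direction, and I expect it to be the main obstacle, precisely because the projection onto an intersection is not the composition of the projections onto the members $H_-(u_i,a_i)$: the normals $u_i$ need not be mutually orthogonal, and only in the orthogonal (box) configuration, typified by $K=\R^m_+$, does composition work. To prove it I would argue locally at the point $P_Cx$, whose normal cone is generated by the outer normals $u_i$ of the finitely many tangent halfspaces active there; on a neighbourhood the first-order behaviour of $P_C$ is governed by these active constraints, the relevant linear model being the orthogonal projection onto the subspace $\bigcap_{i\in A}H(u_i,0)$ cut out by the active supporting hyperplanes. The task thus reduces to showing that this simultaneous projection onto the intersection of several $K$-isotone supporting hyperplanes still maps $K$ into $K$ — that is, to passing from the one-constraint statement (a single $K$-isotone hyperplane) to the several-constraints statement. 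This is exactly the reduction of the problem to isotone projection onto subspaces alluded to in the introduction; it is the technical heart of the argument and is where the compatibility forced on the active normals $u_i$ by the tangency hypothesis must be exploited. Granting this subspace reduction, one then compares $P_Cx$ and $P_Cy$ for $x\leq_K y$ by moving along the segment $[x,y]$, whose direction $y-x$ lies in $K$, and assembling the resulting $K$-monotone increments into the desired relation $P_Cx\leq_K P_Cy$.
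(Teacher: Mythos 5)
Your attempt targets a statement the paper never proves from scratch: Theorem \ref{elso} is imported wholesale, as a consequence of Theorem 1, Theorem 2 and Lemma 4 of \cite{NemethNemeth2013}, so the ``paper's proof'' is a citation of exactly the machinery (isotone tangent hyperplanes/halfspaces and the reduction to subspaces) that you are trying to rebuild. Your necessity half is essentially sound: the equivariances $P_{C+z}(x+z)=P_Cx+z$ and $P_{tC}(tx)=tP_Cx$, the blow-up $C_t=\frac{1}{t}(C-x_0)$ increasing to the tangent cone (a halfspace at a smooth boundary point), pointwise convergence of the projections, closedness of $K$, density of smooth boundary points, and a Lindel\"of argument for countability all fit together. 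You do silently use the implication ``$K$-isotone supporting halfspace $\Rightarrow$ $K$-isotone boundary hyperplane,'' but that direction is easy: for $s$ large enough $P_{H_-(u,a)}(z+su)=P_{H(u,a)}z$, so applying halfspace isotonicity to $x+su\leq_K y+su$ gives hyperplane isotonicity.

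The genuine gap is in sufficiency, and you flag it yourself (``Granting this subspace reduction\dots''). Two concrete problems. First, the local linear model you propose is wrong as stated: the first-order behaviour of $P_C$ is not the orthogonal projection onto the subspace $\cap_{i\in A}H(u_i,0)$ of active tangent hyperplanes. Already for $C$ the unit ball, where exactly one hyperplane is active at $P_Cx$, one has $DP_C(x)=\frac{1}{\|x\|}\left(I-\frac{xx^\top}{\|x\|^2}\right)$, a curvature-dependent contraction of that projection, and for a general (non-polyhedral) $C$ represented by countably many halfspaces the active set at $P_Cx$ need not be finite, nor need the normal cone at $P_Cx$ be generated by the listed $u_i$ (it can be generated by limits of them). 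Second, and more fundamentally, the claim you defer --- that the orthogonal projection onto the intersection of several $K$-isotone hyperplanes again maps $K$ into $K$ --- is not a formal consequence of the one-hyperplane hypothesis: $K$-isotonicity is not preserved under intersections in general, and nothing in your sketch actually exploits the tangency structure that is supposed to force compatibility of the normals. That deferred claim, together with the passage from the local model back to the global inequality $P_Cx\leq_K P_Cy$ (your ``integration along the segment'' step, which itself needs a.e.\ differentiability of $P_C$ along $[x,y]$ and $DP_C(z)(K)\subset K$ a.e.), carries the full difficulty of the theorem; it is precisely what the cited results of \cite{NemethNemeth2013} supply. As it stands, you have proved the easy direction and reduced the hard one to an unproved statement of essentially the same depth.
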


The next theorem follows from Theorem 1 and Lemma 4 of the above cited paper.

\begin{theorem}\label{masodik} 
If $C$ is a closed convex set, then it is $K$-isotone
if and only if it is $K^*$-isotone.
\end{theorem}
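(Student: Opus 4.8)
The plan is to isolate the single key fact that a hyperplane is $K$-isotone if and only if it is $K^*$-isotone, and to feed this into the tangent-halfspace representation of Theorem \ref{elso}. Concretely, suppose first that $C$ has nonempty interior. If $C$ is $K$-isotone, Theorem \ref{elso} writes $C=\cap_{i\in\N}H_-(u_i,a_i)$ with each $H(u_i,a_i)$ tangent to $C$ and $K$-isotone. Once I know each such hyperplane is also $K^*$-isotone, the very same representation together with the converse direction of Theorem \ref{elso} yields that $C$ is $K^*$-isotone. The reverse implication is identical with the roles of $K$ and $K^*$ interchanged (using $K^{**}=K$). So the whole statement reduces to the hyperplane claim.

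First I would characterize $K$-isotone hyperplanes cone-theoretically. Since $\leq_K$ is translation invariant, the isotonicity of $H_-(u,a)$ does not depend on $a$, so I normalize $\|u\|=1$ and use $P_{H_-(u,a)}x = x - \max\{0,\lng u,x\rng-\lng u,a\rng\}u$. Taking $x\leq_K y$, setting $v=y-x\in K$, and comparing $P_{H_-(u,a)}y-P_{H_-(u,a)}x$ across the possible positions of $x$ and $y$ relative to the hyperplane, the binding requirement turns out to be $Qv:=v-\lng u,v\rng u\in K$ for all $v\in K$; the mixed case in which $x$ and $y$ lie on opposite sides is then automatic by convexity of $K$, since $v-tu$ for $t\in[0,\lng u,v\rng]$ lies on the segment between $v\in K$ and $Qv\in K$. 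Hence $H(u,a)$ is $K$-isotone exactly when the orthogonal projection $Q$ onto $u^{\perp}$ maps $K$ into itself.

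The decisive point is that the condition $QK\subseteq K$ is self-dual. The projector $Q$ is self-adjoint, $\lng Qx,z\rng=\lng x,Qz\rng$, and $K^{**}=K$. Assuming $QK\subseteq K$, for every $y\in K^*$ and every $x\in K$ one has $\lng x,Qy\rng=\lng Qx,y\rng\geq 0$ because $Qx\in K$; thus $Qy\in K^*$ and $QK^*\subseteq K^*$. Applying this implication to $K^*$ in place of $K$ and invoking $K^{**}=K$ gives the converse, so $QK\subseteq K\Longleftrightarrow QK^*\subseteq K^*$. By the previous paragraph this says precisely that $H(u,a)$ is $K$-isotone if and only if it is $K^*$-isotone, which closes the nonempty-interior case.

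The step I expect to require the most care is the case $\intr(C)=\varnothing$, for which Theorem \ref{elso} does not apply directly. There $C$ is contained in a proper affine subspace, and I would reduce to the full-dimensional situation either by expressing $P_C$ as the projection onto $\aff(C)$ followed by the projection within that subspace, or by approximating $C$ by a decreasing family of full-dimensional $K$-isotone (equivalently $K^*$-isotone) sets and passing to the limit via continuity of the metric projection. The hyperplane computation of the second paragraph and the self-duality argument of the third are routine; this low-dimensional reduction is the only genuinely delicate point.
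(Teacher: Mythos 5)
Your first three paragraphs are correct, and they are in substance the argument that lies behind the paper's one-line proof: the paper does not prove Theorem \ref{masodik} at all, but simply cites Theorem 1 and Lemma 4 of \cite{NemethNemeth2013}, and the cited Lemma 4 is exactly your hyperplane fact. Your derivation of it is sound: a hyperplane $H(u,a)$ (or the halfspace $H_-(u,a)$) is $K$-isotone precisely when $Q=I-uu^\top$ maps $K$ into $K$ (for the hyperplane itself this is immediate, since $P_{H}y-P_{H}x=Q(y-x)$), and the condition $QK\subseteq K$ is self-dual because $Q$ is self-adjoint and $K^{**}=K$. Feeding this into both directions of Theorem \ref{elso} settles the case $\intr(C)\neq\varnothing$.

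The gap is your fourth paragraph: the theorem is stated for an arbitrary closed convex set $C$, Theorem \ref{elso} genuinely requires $\intr(C)\neq\varnothing$, and neither of your proposed repairs is carried out or automatic. The approximation route needs a decreasing family of full-dimensional $K$-isotone sets with intersection $C$; you do not construct one, and naive thickening fails: with $K=\R^3_+$ and $C$ the third coordinate axis (which is $K$-isotone), the tube $\{x:\,d(x,C)\le\epsilon\}$ is not $K$-isotone (project $(1,0,0)$ and $(1,t,0)$, $t>0$: the first coordinate of the difference of projections is negative), so producing such a family is essentially as hard as the theorem itself. The affine-hull route can be completed, but it needs content you have not supplied: (i) first show $V=\aff(C)$ is itself $K$-isotone, e.g.\ because $P_C$ coincides with $P_V$ near a relative interior point of $C$, which forces $P_{V_0}(K)\subseteq K$ for the direction space $V_0$; (ii) deduce from this that the dual of $K\cap V_0$ taken inside $V_0$ equals $K^*\cap V_0$ (false for a general subspace, true when $P_{V_0}(K)\subseteq K$); (iii) only then apply your full-dimensional case inside $V$, where $C$ has nonempty relative interior, and combine it with the self-duality of subspace isotonicity. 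The paper sidesteps all of this because Theorem 1 and Lemma 4 of \cite{NemethNemeth2013} are stated for arbitrary closed convex sets; it is worth noting, though, that every application of Theorem \ref{masodik} in this paper is to a proper cone, which is full-dimensional, so the case you did prove is the one actually used.
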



\section{Isotone projection onto a proper cone}

The following theorem can be considered a main result of our note, which serves also
as basic tool for the next results.

\begin{theorem}\label{pc}
	Let $K,L$ be proper cones. If $K$ is an $L$-isotone projection set and $\intr(K^*)\cap L$ or $\intr(K^*)\cap L^*$ is nonempty, then $K$ is subdual 
	and $K\subset L\subset K^*$. 
\end{theorem}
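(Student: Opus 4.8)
The plan is to first collapse the disjunctive hypothesis into a single case, then prove the two inclusions $K\subset L$ and $L\subset K^*$ in turn, the first being the crux and the second following almost formally. For the reduction, observe that by Theorem \ref{masodik} the cone $K$ is $L$-isotone if and only if it is $L^*$-isotone, and that $L^*$ is again a proper cone with $L^{**}=L$. Hence it suffices to establish the implication ``if $K$ is $N$-isotone and $\intr(K^*)\cap N\ne\varnothing$, then $K\subset N\subset K^*$'' for a single proper cone $N$: applying it with $N=L$ settles the first alternative directly, and applying it with $N=L^*$ settles the second, the conclusions $K\subset L^*$ and $L^*\subset K^*$ being transported back to $K\subset L$ and $L\subset K^*$ by dualization (using $A\subset B\Leftrightarrow B^*\subset A^*$ for closed convex cones, together with $K=K^{**}$ and $L=L^{**}$).

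So assume $\intr(K^*)\cap N\ne\varnothing$. A preliminary step is to upgrade this to a point lying in \emph{both} interiors: since $\intr(K^*)$ is open and $N=\overline{\intr(N)}$ (as $N$ is closed convex with nonempty interior), any $v\in\intr(K^*)\cap N$ can be replaced by a point $v\in\intr(K^*)\cap\intr(N)$. This common interior point is the engine of the argument.

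The heart of the proof is the inclusion $K\subset N$. Fix $x\in K$; I would exhibit a point that $P_K$ sends to $0$ and that sits $\leq_N$-below $x$. Concretely, since $v\in\intr(N)$ and $x/t+v\to v$, for $t$ large enough $x/t+v\in N$, hence $x+tv\in N$ with $w:=tv\in K^*$. Setting $a:=-w\in -K^*$ gives $P_Ka=0$ by Moreau's theorem (Theorem \ref{moreau}), while $P_Kx=x$; and $x-a=x+w\in N$ means $a\leq_N x$. Isotonicity of $P_K$ then yields $0=P_Ka\leq_N P_Kx=x$, i.e. $x\in N$. This is the step I expect to be the main obstacle, since it is where the geometric hypothesis $\intr(K^*)\cap N\ne\varnothing$ must be converted into the algebraic device of a preimage of $0$ under $P_K$; the scaling trick with the common interior point $v$ is precisely what makes the required $w\in K^*$ available for every $x$.

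Finally, $N\subset K^*$ follows quickly from $K\subset N$. For $d\in N$ we have $-d\leq_N 0$, so isotonicity gives $P_K(-d)\leq_N P_K0=0$, whence $P_K(-d)\in -N$; but $P_K(-d)\in K\subset N$, and $N$ is pointed, so $P_K(-d)\in N\cap(-N)=\{0\}$. By Moreau's theorem $P_K(-d)=0$ forces $-d\in -K^*$, i.e. $d\in K^*$. Thus $N\subset K^*$, and combining with $K\subset N$ gives $K\subset N\subset K^*$; in particular $K\subset K^*$, so $K$ is subdual. Transporting the inclusions through the reduction above completes the proof.
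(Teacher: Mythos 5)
Your proof is correct; it shares the paper's core mechanism for the first inclusion but genuinely differs in how the second inclusion is obtained. For $K\subset N$ both arguments are the same trick: use Theorem \ref{moreau} to produce a point whose projection is $0$ and which lies $\leq_N$-below a given $x\in K$, then apply isotonicity. The paper places the interiority on the $K^*$ side (taking $v\in\intr(K^*)\cap L$, so that $x-tv\in -K^*$ for large $t$, while $tv\in L$ needs mere membership); you place it on the $N$ side (so that $x+tv\in N$ for large $t$, while $-tv\in -K^*$ needs mere membership), which is why you need the preliminary upgrade to $v\in\intr(K^*)\cap\intr(N)$ via the density fact $N=\overline{\intr(N)}$ --- a step the paper's variant avoids. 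The real divergence is the inclusion $L\subset K^*$: the paper bootstraps, deducing $L^*\subset K^*$ from $K\subset L$, checking topologically that $\intr(K^*)\cap L^*\ne\varnothing$, invoking Theorem \ref{masodik} to transfer isotonicity from $L$ to $L^*$, and re-running the first argument with $L^*$ in place of $L$; you instead argue directly that for $d\in N$, isotonicity applied to $-d\leq_N 0$ gives $P_K(-d)\in(-N)\cap K\subset(-N)\cap N=\{0\}$ by pointedness, so $P_K(-d)=0$ and Moreau yields $d\in K^*$. Your route is shorter and more elementary, and it confines Theorem \ref{masodik} to the routine reduction between the two alternatives of the hypothesis (via $L^{**}=L$ and dualization of inclusions), whereas the paper needs that theorem inside the main case as well. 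Both organizations prove the same statement; yours makes the logical skeleton (one lemma, applied twice) cleaner, while the paper's stays closer to the hypothesis as given and uses fewer background facts about closed convex sets.
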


\begin{proof}
Suppose first that $\intr(K^*)\cap L\ne\varnothing$ and let $v\in\intr(K^*)\cap L$. Then, $-v\in\intr(-K^*)$.  Consider an arbitrary 
	element $u\in K$ and an arbitrary positive integer $n$. Then, $(1/n)u-v\in -K^*\iff u-nv\in -K^*$ if $n$ is large enough. By using
	$u-nv\le_L u$, the $L$-isotonicity of $P_K$ and Theorem \ref{moreau}, we get $0=P_K(u-nv)\le_L P_K(u)=u$. Thus $K\subset L$.
	
	Hence $L^*\subset K^*$, or equivalently $K^*\cap L^*=L^*$ which, by using that $L$ is proper (and hence $L^*$ as well), implies 
	$\varnothing\ne\intr(L^*)=\intr(K^*\cap L^*)=\intr(K^*)\cap\intr(L^*)\subset\intr(K^*)\cap L^*$. Since $K$ is an $L$-isotone projection set,
	from Theorem \ref{masodik} it follows that $K$ is also an
	 $L^*$-isotone projection set. Since $\intr(K^*)\cap L^*\ne\varnothing$, we can use the above reasonings to
	get  $K\subset L^*$, which implies $L\subset K^*$. Hence, $K\subset L\subset K^*$ which also shows that $K$ is subdual. 
	
	Next suppose
	that $\intr(K^*)\cap L^*\ne\varnothing$. Then, by using that $K$ is an $L^*$-isotone projection set and the above result
	with $L^*$ replacing $L$, we get that $K$ is subdual and $K\subset L^*\subset K^*$, which implies $K\subset L\subset K^*$.
\end{proof}


\section{The case of self-dual $K$}

We remember that the proper cone $K\subset \R^m_+$ is called \emph{isotone projection cone} if it is $K$-isotone.
A direct verification shows that $\R^m_+$ is an isotone projection cone.
\begin{corollary}\label{csdi}
	Let $K$ be a self-dual cone and $L$ a proper cone. If $K$ is an $L$-isotone projection set and $\intr(K)\cap L$ or $\intr(K)\cap L^*$ is nonempty, 
	then $K=A\R^m_+$, for some orthogonal matrix $A$. Accordingly, the only proper cones $L$ such that $\R^m_+$ is $L$-isotone are the
	orthants of the reference system.
\end{corollary}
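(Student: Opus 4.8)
The plan is to feed the self-duality into Theorem \ref{pc} and then read off the geometry of the resulting cone. First I would observe that, since $K$ is self-dual, $K=K^*$, so $\intr(K)=\intr(K^*)$ and the hypothesis that $\intr(K)\cap L$ or $\intr(K)\cap L^*$ be nonempty is literally the hypothesis of Theorem \ref{pc}. Applying that theorem gives $K\subset L\subset K^*=K$, whence $L=K$. Thus the assumption collapses to the statement that $P_K$ is isotone with respect to its own order $\le_K$; in other words $K$ is an isotone projection cone in the classical sense.

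The next step is to exploit the rigidity of such cones. By the Isac--N\'emeth characterization \cite{IsacNemeth1986}, a cone whose metric projection is isotone for its own order must be simplicial, say $K=\cone\{a_1,\dots,a_m\}$ with the $a_i$ linearly independent; writing $A=[a_1,\dots,a_m]$ we have $K=A\R^m_+$ and, by a direct computation with the inner product, $K^*=A^{-T}\R^m_+$. I would then use self-duality in the form $A\R^m_+=A^{-T}\R^m_+$: two simplicial cones coincide exactly when their extreme rays agree, so $A^{-T}=A\,\Pi D$ for some permutation matrix $\Pi$ and positive diagonal $D$. Multiplying by $A^T$ on the left yields $I=A^T A\,\Pi D$, i.e. the Gram matrix $G=A^TA=D^{-1}\Pi^T$. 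Since $G$ is a symmetric positive definite Gram matrix its diagonal entries $\|a_i\|^2$ are positive, and $G_{ii}=d_i^{-1}\Pi_{ii}$ forces $\Pi_{ii}=1$ for every $i$, hence $\Pi=I$. Therefore $G=D^{-1}$ is diagonal, the generators are pairwise orthogonal, and after normalizing them we obtain an orthogonal matrix with $K=A\R^m_+$.

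For the final assertion I would specialize to $K=\R^m_+$ and characterize all proper $L$ for which $\R^m_+$ is $L$-isotone. Here the tool is Theorem \ref{elso}: in any representation of $\R^m_+$ as an intersection of $L$-isotone tangent halfspaces the $m$ facet hyperplanes $\{x^i=0\}$ must occur, so each of them is $L$-isotone. Since projection onto $\{x^i=0\}$ is the linear map $\pi_i$ that zeroes the $i$-th coordinate, $L$-isotonicity of this hyperplane is equivalent to $\pi_i(L)\subset L$. Composing the commuting maps $\pi_j$ over all $j\ne i$ shows that $x^i e_i\in L$ for every $x\in L$ and every $i$. Pointedness of $L$ then forbids $e_i$ and $-e_i$ from both lying in $L$, so $L$ is contained in a single coordinate orthant $O=\{x:\varepsilon_i x^i\ge 0,\ i=1,\dots,m\}$; and since $L$ is generating, each $\varepsilon_i e_i$ in fact belongs to $L$, giving $O\subset L$ and hence $L=O$. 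Conversely every such orthant is self-dual and makes the coordinatewise positive-part map isotone, so the orthants are exactly the admissible cones.

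The step I expect to carry the real weight is the passage from ``self-dual and $\le_K$-isotone'' to ``simplicial with orthogonal generators'': simpliciality is borrowed from \cite{IsacNemeth1986}, and the only genuinely new computation is the Gram-matrix argument that upgrades bare self-duality of a simplicial cone to orthogonality of its generators. In the final assertion the delicate point is justifying that the coordinate hyperplanes are forced to appear among the tangent $L$-isotone hyperplanes supplied by Theorem \ref{elso}; once $\pi_i(L)\subset L$ is in hand, the pointedness-plus-generating squeeze is routine.
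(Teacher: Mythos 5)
Your proposal is correct; its first half follows the paper's route, while your treatment of the final assertion is genuinely different. For the first claim both you and the paper feed self-duality into Theorem \ref{pc} to get $K\subset L\subset K^*=K$, hence $L=K$, and then invoke \cite{IsacNemeth1986}; the paper simply cites that reference for ``self-dual isotone projection cone $=A\R^m_+$ with $A$ orthogonal'', whereas you make this explicit: simpliciality comes from the characterization of isotone projection cones, and your Gram-matrix computation ($A^TA=D^{-1}\Pi^T$, positivity of the diagonal entries forcing $\Pi=I$) correctly upgrades self-duality of a simplicial cone to orthogonality of its generators --- a useful filling-in of the step the paper outsources. For the final assertion the paper argues differently: it chooses an orthant $K$ of the reference system with $\intr(K)\cap L\neq\varnothing$ (possible since $L$ is proper), transfers $L$-isotonicity from $\R^m_+$ to $K$ via Theorem \ref{elso} (same tangent hyperplanes), and concludes $L=K$ from the first part. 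You instead work directly with $\R^m_+$: its facet hyperplanes must be $L$-isotone, linearity of the coordinate projections $\pi_i$ converts this into the invariance $\pi_i(L)\subset L$, and composing these maps together with pointedness and generation of $L$ squeezes $L$ into being exactly a coordinate orthant; you also verify the converse (every orthant does work), which the corollary's ``the only proper cones'' phrasing strictly requires and which the paper leaves implicit. One caveat applies equally to your proof and to the paper's: your claim that the facet hyperplanes ``must occur in any representation'' supplied by Theorem \ref{elso}, like the paper's claim that an orthant and $\R^m_+$ ``have the same supporting hyperplanes'', is valid only if ``tangent hyperplane'' means a supporting hyperplane at a smooth boundary point (for a polyhedral cone, a facet hyperplane), which is the convention of \cite{NemethNemeth2013}. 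Under the weaker reading ``arbitrary supporting hyperplane'' the step fails: for instance $\R^2_+=\bigcap_k\{x:\langle v_k,x\rangle\geq 0\}$ for a sequence of directions $v_k$ dense in the open first quadrant, and each of these bounding hyperplanes supports $\R^2_+$ only at the origin, so no facet hyperplane need appear. Since the paper's own argument rests on the same convention, this is not a gap specific to your proof, but it is worth stating the convention when you use it.
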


\begin{proof}
	By Theorem \ref{pc}, we get $K=L$. Thus by the main result in \cite{IsacNemeth1986}, $K$ is a self-dual isotone projection cone, or equivalently 
	$K=A\R^m_+$, for some orthogonal matrix $A$. 
	
	Suppose now that $\R^m_+$ is $L$-isotone with $L$ proper. Denote by $K$
	an orthant of the reference system, with $\intr (K)\cap L\not=\emptyset$.
	$K$ is self-dual, hence we must have by Theorem \ref{pc} that $K\subset L$ since $K$ is $L$-isotone
	together with $\R^m_+$ (this follows from Theorem \ref{elso}, $K$ and $\R^m_+$ having the same supporting hyperplanes).
	Now $K$ is also self-dual, hence $L=K$ by the first part of our proof.
\end{proof}

Denote by $\p$ the boundary mapping of sets.

\begin{proposition}\label{plorno}
	If $K$ is a sefdual, smooth, strictly convex cone in $\R^m$ with $m\geq 3$
	, then there is no proper cone
	$L$ in $\R^m$ such that $K$ is an $L$-isotone projection set.
\end{proposition}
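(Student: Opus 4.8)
The plan is to assume, for contradiction, that $L$ is a proper cone making $K$ an $L$-isotone projection set, and to dichotomize on whether $L$ or $L^*$ meets $\intr(K)$; I use $K=K^*$ throughout. If $\intr(K)\cap L\neq\varnothing$ or $\intr(K)\cap L^*\neq\varnothing$, then the hypotheses of Theorem \ref{pc} hold (for $L$ directly, or for $L^*$ after invoking Theorem \ref{masodik} that $K$ is also $L^*$-isotone), giving $K\subseteq L\subseteq K^*=K$, so $L=K$. Then $K$ is $K$-isotone, i.e.\ a self-dual isotone projection cone, so $K=A\R^m_+$ by \cite{IsacNemeth1986}; but for $m\ge3$ a simplicial cone has an $(m-1)$-dimensional facet lying in a supporting hyperplane, so $\dim(K\cap H)=m-1\ge2$, contradicting strict convexity. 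This is the decisive use of $m\ge3$ in this branch.

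So I may assume $\intr(K)\cap L=\varnothing=\intr(K)\cap L^*$, whence $\dim(K\cap L)\le1$ and $\dim(K\cap L^*)\le1$ by Lemma \ref{intersec}. Writing $K=\cap_i H_-(u_i,0)$ with each $H(u_i,0)$ tangent and $L$-isotone (Theorem \ref{elso}), and noting the same for $L^*$, I would translate ``$H(u,0)$ is $M$-isotone'' into ``the orthogonal projection $T$ onto $u^{\perp}$ maps $M$ into $M$''. Since $T$ depends only on the line $\R u$ while the normals $u_i$ sweep $-\p K$, this upgrades to the pointwise invariance $T_v M\subseteq M$ for every $v\in\p K$, for both $M=L$ and $M=L^*$.

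The core step is then to bound the extreme rays of $L^*$. For an extreme ray $b$ of $L^*$ pick a nonzero supporting functional $c\in L^{**}=L$ with $\langle c,b\rangle=0$; testing $T_v b\in L^*$ against $c$ yields
\[
\langle v,b\rangle\,\langle v,c\rangle\le0\quad\text{for all }v\in K.
\]
I would first show this forces $b\in K\cup(-K)$: otherwise $b^{\perp}$ is not a supporting hyperplane, hence meets $\intr(K)$, so $K\cap b^{\perp}$ is $(m-1)$-dimensional and spans $b^{\perp}$; a continuity argument across $b^{\perp}$ gives $K\cap b^{\perp}\subseteq c^{\perp}$, so $c\parallel b$ and $c=0$, a contradiction. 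If $b\in K$ then $b\in K\cap L^*$, so $\dim(K\cap L^*)\le1$ allows at most one such ray. If $b\in -K$ then $\langle v,b\rangle\le0$ on $K$ forces $\langle v,c\rangle\ge0$ on the dense set $\{v\in K:\langle v,b\rangle<0\}$, so $c\in K$; thus $c\in K\cap L\subseteq\R_+r$ for a single ray $r$, and $\langle r,b\rangle=0$ together with strict convexity (the supporting hyperplane $r^{\perp}$ meets $K$ in at most one ray) pins $-b$ to that ray. Hence $L^*$ has at most two extreme rays, so $\dim L^*\le2<m$, contradicting that $L^*$ is proper.

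I expect this second case to be the main obstacle. The delicate points are the passage from the $L$-isotonicity of the individual tangent hyperplanes to the global invariance $T_vL\subseteq L$ over all of $\p K$, and the continuity argument identifying $K\cap b^{\perp}\subseteq c^{\perp}$; the final extreme-ray count then leans on strict convexity, the two dimension bounds $\dim(K\cap L)\le1$ and $\dim(K\cap L^*)\le1$, and $m\ge3$ simultaneously.
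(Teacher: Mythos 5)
Your first case is fine and is essentially the paper's: when $\intr(K)\cap L\neq\varnothing$ (or $\intr(K)\cap L^*\neq\varnothing$, via Theorem \ref{masodik}), Theorem \ref{pc} forces $L=K$, the result of \cite{IsacNemeth1986} makes $K$ an orthant $A\R^m_+$, and polyhedrality clashes with strict convexity for $m\ge3$; this is Corollary \ref{csdi} plus the opening of the paper's proof. The genuine gap is in your second case, at the displayed inequality. The invariance $T_vL^*\subseteq L^*$ comes from Theorem \ref{elso}, whose hyperplanes are \emph{tangent} to $K$; so even after the density upgrade (itself a nontrivial step needing a Milman-type converse of Krein--Milman plus strict convexity, which you only gesture at), the normals sweep only $\p K$, never $\intr(K)$. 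You are therefore entitled to $\lng v,b\rng\lng v,c\rng\le0$ only for $v\in\p K$, yet you assert it ``for all $v\in K$'' and then use it precisely at interior points: your continuity argument ``across $b^\perp$'' runs inside $\intr(K)\cap b^\perp$, and your ``dense set $\{v\in K:\lng v,b\rng<0\}$'' is essentially $\intr(K)$. The extension from $\p K$ to $K$ is not a formality and is false in general: $v\mapsto\lng v,b\rng\lng v,c\rng$ is a quadratic form, and such a form can be nonpositive on the boundary of a cone while positive inside (for $\R^2_+$ take $b=(2,-1)$, $c=(-1,3)$: the product is $\le 0$ on both boundary rays but equals $2$ at $(1,1)$). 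So, as written, the deductions that $b\in K\cup(-K)$ and that $c\in K$ do not follow.

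The gap is repairable, but only with an idea you do not supply: run both arguments entirely on $\p K$. For the dichotomy $b\in K\cup(-K)$, note that $\p K\cap b^\perp$ is the relative boundary of the $(m-1)$-dimensional cone $K\cap b^\perp$, hence spans $b^\perp$ when $m\ge 3$; to get $\lng v,c\rng=0$ there you must show each such $v$ is a two-sided limit within $\p K$ of points with $\lng\cdot,b\rng>0$ and $\lng\cdot,b\rng<0$, and that is exactly where \emph{smoothness} enters: otherwise $\lng\cdot,b\rng$ would have a local extremum on the manifold $\p K$ at $v$, forcing $b$ to be parallel to the normal of $K$ at $v$, i.e.\ $b\in K\cup(-K)$. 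For the step giving $c\in K$, use that $\{v\in\p K:\lng v,b\rng=0\}=K\cap b^\perp$ is at most a ray by strict convexity, so its complement is dense in $\p K$. With these fixes your extreme-ray count (at most one ray of $L^*$ in $K$, at most one in $-K$, none elsewhere, hence $\dim L^*\le 2<m$) does go through, and it yields a proof genuinely different from the paper's, which is much shorter: the paper shows $P_K(L)\neq\{0\}$ (using $-K=P_K^{-1}(\{0\})$ from Theorem \ref{moreau} and properness of $L$), concludes from Lemma \ref{intersec} that $K\cap L$ is a single boundary ray equal to $P_K(L)$, and then applies Lemma \ref{projsc} --- which you never use --- to place $L\subseteq P_K^{-1}(P_K(L))$ inside a two-dimensional subspace.
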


\begin{proof}
	Suppose to the contrary, that $L\subset\R^m$ is a proper cone such that $K$ is a $L$-isotone projection set.
	
	Let us first assume that 
	$\intr(K)\cap L\ne\varnothing$. 
	Then, by using that $K$ is self-dual and Corollary \ref{csdi}, we get that 
	$K=A\R^m_+$ for some orthogonal matrix $A$, which is absurd because $K$ is not polyhedral.

	Next, assume that 
	\begin{equation}\label{empty}
	\intr(K)\cap L=\varnothing. 
	\end{equation}
	Since $K$ is a $L$-isotone projection set, we have that $P_K(L)\subset K\cap L$. Since 
	$\intr(K)\cap L=\varnothing$, we have that 
	\begin{equation}\label{plk}
		P_K(L)\subset \p K\cap L\subset K\cap L. 
	\end{equation}
	
	We show first that $P_K(L)\not= \{0\}.$
	To this end we observe that since $K$ is $L$-isotone, so is $-K$ (\cite{NemethNemeth2012a} Lemma 3). 
	The assumption $\intr(-K)\cap L\not=\varnothing$ would yield a contradiction as at the beginning of our proof.
	Hence
	$$L\subset \R^m \setminus (\intr(K)\cup \intr(-K)).$$
	Since $K$ is self-dual, $-K= P^{-1}_K(\{0\})$ by Theorem \ref{moreau}.
	Now, $L$ being proper, it must have points in $\R^m\setminus (K\cup -K)$, which 
	confirms our claim. 
	
	We must have according to (\ref{empty}) and Lemma \ref{intersec} that $K\cap L$ is an
	one-dimensional ray on the boundary of $K$ and $P_K(L)$ is itself this ray.
	Now, $L\subset P^{-1}_K(P_K(L))$ is contained by Lemma \ref{projsc} in a two-dimensional subspace.
	Hence $L$ cannot be a proper cone.
	
	\end{proof}
	

\section{Isotone projection onto a simplicial cone}

Let $e_1,\dots,e_m\in\R^m$ be linearly independent and $K=\cone\{e_1,\dots,e_m\}$ be a simplicial cone. Let
$\mc E=\{x=(x^1,\dots,x^m)^\top\in\R^m:|x^i|=1,\textrm{ }i=1,\dots m\}$ and $\varepsilon\in\mc E$. 
Denote \[K_\varepsilon=\cone\{\varepsilon^1e_1,\dots,\varepsilon^me_m\}.\] 

\begin{proposition}\label{pepsi}
	Let $K\subset\R^m$ be a simplicial cone and $L$ a proper cone such that $K$ is an $L$-isotone projection set. Then, there 
	exists an $\varepsilon\in\mc E$ such that $K_\varepsilon$ is subdual, $L$-isotone and $K_\varepsilon\subset L\subset K^*_\varepsilon$.
\end{proposition}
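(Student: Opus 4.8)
The plan is to pin down the correct sign pattern $\varepsilon$ by combining Theorem \ref{elso} (which controls $L$-isotonicity through facet hyperplanes) with Theorem \ref{pc} (which supplies subduality and the inclusions once its interior-intersection hypothesis is met). The whole argument splits into showing that \emph{every} $K_\varepsilon$ is $L$-isotone, and then picking the one $\varepsilon$ that additionally satisfies the hypothesis of Theorem \ref{pc}.

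First I would record that all the cones $K_\varepsilon$ share the same facet hyperplanes as $K$. Indeed, dropping the $i$-th generator from $K_\varepsilon=\cone\{\varepsilon^1e_1,\dots,\varepsilon^me_m\}$ leaves a facet spanning $H_i=\spa\{e_j:j\neq i\}$, independently of $\varepsilon$. Since isotonicity of a hyperplane is a property of the underlying subspace — the metric projection onto a hyperplane, and hence its isotonicity, is unchanged if one reverses the orientation of the normal — Theorem \ref{elso} applied to the $L$-isotone simplicial cone $K$ shows that each $H_i$ is an $L$-isotone hyperplane. Writing $K_\varepsilon$ as the intersection of its $m$ facet halfspaces, each bounded by one of the $L$-isotone hyperplanes $H_i$ and tangent to $K_\varepsilon$, Theorem \ref{elso} then yields that every $K_\varepsilon$ is $L$-isotone.

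The second step is to choose $\varepsilon$ so that $\intr(K^*_\varepsilon)\cap L\neq\varnothing$. Here the key is to pass to the dual basis $f_1,\dots,f_m$ determined by $\lng e_i,f_j\rng=\delta_{ij}$; a short computation gives $K^*_\varepsilon=\cone\{\varepsilon^1f_1,\dots,\varepsilon^mf_m\}$. Thus, as $\varepsilon$ ranges over $\mc E$, the cones $K^*_\varepsilon$ are exactly the $2^m$ orthants of the $f$-coordinate system, so their interiors are pairwise disjoint and their union is the complement of the finitely many $f$-coordinate hyperplanes, an open dense set. Since $L$ is proper it has nonempty interior, and an open set cannot lie inside the nowhere-dense union of those hyperplanes; hence $\intr(L)$ meets $\intr(K^*_\varepsilon)$ for at least one $\varepsilon$, so $\intr(K^*_\varepsilon)\cap L\neq\varnothing$ for that $\varepsilon$.

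Finally, for this $\varepsilon$ I would apply Theorem \ref{pc} with $K_\varepsilon$ in the role of $K$: both $K_\varepsilon$ and $L$ are proper, $K_\varepsilon$ is $L$-isotone by the first step, and the interior-intersection condition holds by the second step, so the theorem delivers precisely that $K_\varepsilon$ is subdual and $K_\varepsilon\subset L\subset K^*_\varepsilon$. I expect the main obstacle to be the second step, namely guaranteeing that \emph{some} $K_\varepsilon$ satisfies the interior hypothesis of Theorem \ref{pc}; the cleanest route is the dual-basis identification showing that the $K^*_\varepsilon$ tile $\R^m$. A secondary point needing care is the orientation-independence of hyperplane isotonicity invoked in the first step, since that is exactly what lets the $L$-isotonicity of $K$ propagate to every sign variant $K_\varepsilon$.
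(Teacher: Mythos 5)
Your proof is correct and follows essentially the same route as the paper's: find $\varepsilon$ with $\intr(K^*_\varepsilon)\cap L\neq\varnothing$ via the covering $\cup_{\varepsilon\in\mc E}K^*_\varepsilon=\R^m$, transfer $L$-isotonicity from $K$ to $K_\varepsilon$ through Theorem \ref{elso} using the shared facet hyperplanes, and conclude with Theorem \ref{pc}. Your write-up merely supplies details the paper leaves implicit (the dual-basis identification of $K^*_\varepsilon$ and the orientation-independence of hyperplane isotonicity), which is fine.
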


\begin{proof}
	Since $\cup_{\varepsilon\in\mc E} K^*_\varepsilon=\R^m$ and $L$ is proper we have that $\intr(K^*_\varepsilon)\cap L\ne\varnothing$
	for some $\varepsilon\in\mc  E$. Since the tangent hyperplanes of $K_\varepsilon$ coincide with the tangent hyperplanes of $K$
	it follows from Theorem \ref{elso}
	that $K_\varepsilon$ is also an $L$-isotone projection set.  Hence, the result follows from Theorem \ref{pc}. 
\end{proof}

Denote $N=\{1,\dots,n\}$. For an index set $I\subset N$ denote $I^c=N\setminus I$ the complementary index set of $I$. For any vector $x\in\R^m$
denote by $\diag(x)$ the diagonal matrix which contains $x$ in the main diagonal such that the $(i,i)$-th entry of $\diag(x)$ is $x^i$, (where
any vector $y\in\R^m$ is written as $y=(y^1,\dots,y^m)^\top$), while its other entries are $0$. A simplicial cone $K=\cone\{e_1,\dots,e_m\}$ is subdual if and only if $E^\top E$
is an $m\times m$ nonnegative matrix, where $E=(e_1,\dots,e_m)$ (the matrix with columns $e_i$). $E$ is called the matrix of $K$.

\begin{lemma}\label{lepsi}
	Let $K=\cone\{e_1,\dots,e_m\}$ be a simplicial cone. Then, there exists a $\varepsilon\in\mc E$ such that $K_\varepsilon$ is subdual if
	and only if there exists an index set $I\subset N$ such that $\lng e_i,e_j\rng\ge0$ for any $i,j\in I$, $\lng e_k,e_\ell\rng\ge0$ for any 
	$k,\ell\in I^c$, and $\lng e_i,e_k\rng\le0$ for any $i\in I$ and any $k\in I^c$.
\end{lemma}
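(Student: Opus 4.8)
The plan is to reduce both sides of the equivalence to a single family of sign conditions on the entries of a Gram matrix, and then to match them through the obvious bijection between sign vectors and index partitions. First I would record the matrix of $K_\varepsilon$. Since the columns of $E=(e_1,\dots,e_m)$ are the generators $e_i$, scaling the $i$-th column by $\varepsilon^i$ gives $E_\varepsilon=E\diag(\varepsilon)$, so that
\[
E_\varepsilon^\top E_\varepsilon=\diag(\varepsilon)\,E^\top E\,\diag(\varepsilon),
\]
whose $(i,j)$ entry equals $\varepsilon^i\varepsilon^j\lng e_i,e_j\rng$. By the subduality criterion recalled immediately before the statement, $K_\varepsilon$ is subdual if and only if this matrix is entrywise nonnegative, i.e. if and only if $\varepsilon^i\varepsilon^j\lng e_i,e_j\rng\ge 0$ for all $i,j\in N$. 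Thus the existence of some $\varepsilon\in\mc E$ with $K_\varepsilon$ subdual is equivalent to the existence of a sign vector making all these products nonnegative.

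Next I would set up the bijection between sign vectors $\varepsilon\in\mc E$ and index sets $I\subset N$ by declaring $I=\{i\in N:\varepsilon^i=1\}$, so that $I^c=\{i\in N:\varepsilon^i=-1\}$; this is plainly one-to-one. The only algebraic fact needed is that $\varepsilon^i\varepsilon^j=1$ exactly when $i,j$ lie in the same block (both in $I$ or both in $I^c$) and $\varepsilon^i\varepsilon^j=-1$ exactly when they lie in different blocks. Substituting this into $\varepsilon^i\varepsilon^j\lng e_i,e_j\rng\ge 0$ splits the condition into exactly three cases: for $i,j\in I$ it reads $\lng e_i,e_j\rng\ge 0$; for $k,\ell\in I^c$ it reads $\lng e_k,e_\ell\rng\ge 0$; and for $i\in I,\ k\in I^c$ it reads $-\lng e_i,e_k\rng\ge 0$, that is $\lng e_i,e_k\rng\le 0$. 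These are precisely the three conditions in the statement.

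This yields both implications at once. Given a suitable $\varepsilon$, the set $I=\{i:\varepsilon^i=1\}$ satisfies the three inequalities by the case analysis above; conversely, given an index set $I$ fulfilling the three conditions, the sign vector with $\varepsilon^i=1$ on $I$ and $\varepsilon^i=-1$ on $I^c$ makes every product $\varepsilon^i\varepsilon^j\lng e_i,e_j\rng$ nonnegative, so $E_\varepsilon^\top E_\varepsilon$ is nonnegative and $K_\varepsilon$ is subdual. The argument is essentially a direct translation, so I do not expect a genuine obstacle; the only point requiring care is the sign bookkeeping, namely confirming that the three listed inequalities exhaust all index pairs under the partition $N=I\cup I^c$ and that the sign of $\varepsilon^i\varepsilon^j$ is correctly recorded in each case.
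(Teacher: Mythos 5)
Your proof is correct and follows essentially the same route as the paper: both compute the Gram matrix of $K_\varepsilon$ as $\diag(\varepsilon)\,E^\top E\,\diag(\varepsilon)$, apply the subduality criterion (entrywise nonnegativity), and translate the sign pattern $\varepsilon^i\varepsilon^j\lng e_i,e_j\rng\ge 0$ into the three conditions via the correspondence $I=\{i:\varepsilon^i=1\}$. Your formulation in terms of products $\varepsilon^i\varepsilon^j$ is just a cleaner restatement of the paper's description of flipping the signs of rows and columns indexed by $I^c$.
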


\begin{proof}
	Let $\varepsilon\in\mc E$. Let $I=\{i\in N:\varepsilon_i=1\}$. Then, $I^c=\{i\in N:\varepsilon_i=-1\}$. 
	Then, the matrix of $K_\varepsilon$ is $ED$, where $E=(e_1,\dots,e_m)$ and $D=\diag(\varepsilon)$. Then, $K_\varepsilon$ is subdual
	if and only if $DE^\top ED=(ED)^\top ED$ is nonnegative. However, $DE^\top ED$ is the matrix whose rows and columns corresponding to
	each index of the index set $I^c$ are the corresponding rows and columns of $E^\top E$, respectively multiplied by $-1$. Hence, 
	$DE^\top ED$ is nonnegative if and only if 
	$\lng e_i,e_j\rng\ge0$ for any $i,j\in I$, $\lng e_k,e_\ell\rng\ge0$ for any $k,\ell\in I^c$, and $\lng e_i,e_k\rng\le0$ for any 
	$i\in I$ and any $k\in I^c$. This follows because the $(r,s)$-th entry of $E^\top E$ is $\lng e_r,e_s\rng$ for any $r,s\in N$ and 
	therefore for any $i,j\in I$ the $(i,j)$-th entry of $DE^\top ED$ is $\lng e_i,e_j\rng$, for any $k,\ell\in I^c$ the $(k,\ell)$-th entry
	of $DE^\top ED$ is $\lng e_k,e_\ell\rng$, and for any $i\in I$ and any $k\in I^c$ the $(i,k)$-th entry of $DE^\top ED$ is 
	$-\lng e_i,e_k\rng$.
\end{proof}

\begin{proposition}\label{psubd}
	Let $K=\cone\{e_1,\dots,e_m\}\subset\R^m$ be a simplicial cone and $L$ a proper cone such that $K$ is an $L$-isotone projection set. 
	Then, there exists an index set $I\subset N$ such that $\lng e_i,e_j\rng\ge0$ for any $i,j\in I$, $\lng e_k,e_\ell\rng\ge0$ for any 
	$k,\ell\in I^c$, and $\lng e_i,e_k\rng\le0$ for any $i\in I$ and any $k\in I^c$.
\end{proposition}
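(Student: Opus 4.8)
The plan is to chain together the two immediately preceding results. Proposition \ref{pepsi} hands us a sign vector $\varepsilon$ making $K_\varepsilon$ subdual, and Lemma \ref{lepsi} converts that subduality into precisely the inner-product inequalities asserted here; nothing further is required.

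First I would apply Proposition \ref{pepsi}. Its hypotheses are exactly those of the present proposition: $K=\cone\{e_1,\dots,e_m\}$ is simplicial and $L$ is a proper cone for which $K$ is an $L$-isotone projection set. The conclusion provides some $\varepsilon\in\mc E$ for which $K_\varepsilon$ is subdual. (The proposition yields more — that $K_\varepsilon$ is $L$-isotone and satisfies $K_\varepsilon\subset L\subset K_\varepsilon^*$ — but only subduality is needed downstream.)

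Next I would invoke Lemma \ref{lepsi}, which establishes the equivalence between the existence of an $\varepsilon\in\mc E$ rendering $K_\varepsilon$ subdual and the existence of an index set $I\subset N$ satisfying $\lng e_i,e_j\rng\ge0$ for all $i,j\in I$, $\lng e_k,e_\ell\rng\ge0$ for all $k,\ell\in I^c$, and $\lng e_i,e_k\rng\le0$ for all $i\in I$, $k\in I^c$. Since Proposition \ref{pepsi} has just supplied a suitable $\varepsilon$, the ``only if'' direction of Lemma \ref{lepsi} delivers the desired index set $I$, completing the argument.

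I do not expect any genuine obstacle: the entire substance has been front-loaded into Proposition \ref{pepsi} (which rests on the main Theorem \ref{pc} and on the fact that $K$ and $K_\varepsilon$ share tangent hyperplanes, so that $L$-isotonicity transfers via Theorem \ref{elso}) and into the diagonal-conjugation computation of Lemma \ref{lepsi}. The only point meriting a moment's care is bookkeeping: Lemma \ref{lepsi} is phrased as a biconditional quantified over all $\varepsilon\in\mc E$, so I would make explicit that I use only its forward direction, applied to the single $\varepsilon$ produced by Proposition \ref{pepsi}.
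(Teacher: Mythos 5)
Your proposal is correct and is exactly the paper's argument: the paper's proof of Proposition \ref{psubd} is the one-line citation of Proposition \ref{pepsi} followed by Lemma \ref{lepsi}, which you have simply spelled out, correctly identifying that only the forward (``only if'') direction of the lemma's biconditional is used. Nothing is missing.
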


\begin{proof}
	It follows from Proposition \ref{pepsi} and Lemma \ref{lepsi}.
\end{proof}

\begin{corollary}\label{ellpeld}
	Let $K=\cone\{e_1,\dots,e_m\}\subset\R^m$ be a simplicial cone. Suppose that $i,j,k\in N$ are three pairwise distinct indices such that
	$\lng e_i,e_j\rng<0$, $\lng e_i,e_k\rng<0$ and $\lng e_j,e_k\rng<0$. Then there is no proper cone $L$ such that $K$ is an $L$-isotone
	projection set.
\end{corollary}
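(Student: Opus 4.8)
The plan is to derive a contradiction with Proposition~\ref{psubd} by a counting argument. Suppose, for contradiction, that there does exist a proper cone $L$ such that $K$ is an $L$-isotone projection set. Then Proposition~\ref{psubd} guarantees an index set $I\subset N$ with the sign pattern: $\lng e_p,e_q\rng\ge0$ whenever $p,q$ lie on the same side of the partition $\{I,I^c\}$, and $\lng e_p,e_q\rng\le0$ whenever they lie on opposite sides. The hypothesis, however, says the three distinct indices $i,j,k$ are \emph{pairwise} negatively correlated, i.e. $\lng e_i,e_j\rng<0$, $\lng e_i,e_k\rng<0$, $\lng e_j,e_k\rng<0$, all three inequalities being strict.

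The key observation is a pigeonhole statement: among any three indices distributed into two classes (the class $I$ and the class $I^c$), at least two of them must fall into the same class. Applying this to $\{i,j,k\}$, some pair among $(i,j)$, $(i,k)$, $(j,k)$ lies entirely inside $I$ or entirely inside $I^c$. For that pair Proposition~\ref{psubd} forces the inner product to be $\ge0$. But each of these three inner products is strictly negative by hypothesis, which is the desired contradiction.

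Concretely, I would write it as follows. By Proposition~\ref{psubd} there is an $I\subset N$ realizing the stated sign pattern. Two of the three indices $i,j,k$ necessarily belong to the same element of the partition $\{I,I^c\}$; say without loss of generality this pair is $\{i,j\}$ (the other two cases are handled identically by relabelling). If both lie in $I$, then $\lng e_i,e_j\rng\ge0$; if both lie in $I^c$, then again $\lng e_i,e_j\rng\ge0$. Either way $\lng e_i,e_j\rng\ge0$, contradicting the assumption $\lng e_i,e_j\rng<0$.

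There is essentially no analytic obstacle here; the whole content is combinatorial, and the only mild care needed is the ``without loss of generality'' reduction, which is legitimate because the hypothesis is symmetric in $i,j,k$ and the conclusion of Proposition~\ref{psubd} is symmetric under swapping $I$ with $I^c$. Thus the statement is an immediate corollary, and the single nontrivial ingredient is recognizing that a two-colouring of three objects always produces a monochromatic pair, which rules out the existence of the required $L$.
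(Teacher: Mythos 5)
Your proposal is correct and follows essentially the same route as the paper: both invoke Proposition~\ref{psubd} to obtain the index set $I$ and then derive the contradiction from the fact that a partition of three indices into $I$ and $I^c$ must place two of them in the same class, forcing a nonnegative inner product against the strict negativity hypothesis. The only difference is presentational --- the paper argues that each of the three pairs must be split between $I$ and $I^c$ and calls the resulting impossibility ``obvious,'' whereas you make the pigeonhole step explicit; your version is, if anything, slightly more complete.
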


\begin{proof}
	Suppose that $L$ is a proper cone such that $K$ is an $L$-isotone projection set. From Proposition \ref{psubd}, there exists an index set
	$I\subset N$ such that one of $i,j$ belong to $I$ and another one to $I^c$, and such that similar statements hold for $i,k$ and 
	$j,k$, respectively. 
	This leads to an obvious contradiction. Hence, there is no proper cone $L$ such that $K$ is an $L$-isotone projection set.
\end{proof}

\begin{proposition}
	Let $K\subset\R^m$ be an isotone projection cone. Then, $K_\varepsilon$ is a $K$-isotone projection set for any $\varepsilon\in E$.
\end{proposition}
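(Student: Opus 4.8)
The plan is to argue exactly along the lines of the proof of Proposition~\ref{pepsi}, transporting the tangent--hyperplane description of Theorem~\ref{elso} from $K$ to $K_\varepsilon$. To begin, I would recall that by the characterization of Isac and N\'emeth in \cite{IsacNemeth1986} every isotone projection cone is simplicial; thus I may write $K=\cone\{e_1,\dots,e_m\}$, so that $K_\varepsilon=\cone\{\varepsilon^1e_1,\dots,\varepsilon^me_m\}$ is a well-defined simplicial cone for each $\varepsilon\in\mc E$, and the claim ``$K_\varepsilon$ is a $K$-isotone projection set'' is meaningful.

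The key geometric observation, already used in the proof of Proposition~\ref{pepsi}, is that $K$ and $K_\varepsilon$ have the same tangent hyperplanes. Indeed, the facets of $K$ are the cones $\cone\{e_j:j\ne i\}$, whose supporting hyperplane through the origin is $\spa\{e_j:j\ne i\}$; replacing each generator $e_j$ by $\varepsilon^je_j$ does not change this linear span, so the facet hyperplanes of $K_\varepsilon$ coincide, as hyperplanes through the origin, with those of $K$.

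With this in hand the argument is immediate. Since $K$ is an isotone projection cone it is $K$-isotone, so Theorem~\ref{elso} provides a representation $K=\cap_i H_-(u_i,a_i)$ in which every tangent hyperplane $H(u_i,a_i)$ is $K$-isotone; because $K$ is simplicial and full-dimensional, each of its $m$ facet hyperplanes must occur among these and is therefore $K$-isotone. By the previous paragraph the facet hyperplanes of $K_\varepsilon$ are the same hyperplanes, hence $K$-isotone, and a second application of Theorem~\ref{elso} (its sufficiency direction) to the facet representation $K_\varepsilon=\cap_i H_-(u_i',a_i')$ yields that $K_\varepsilon$ is $K$-isotone, as claimed.

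The one point that needs care --- and the only real obstacle --- is that the supporting hyperplanes of $K$ and $K_\varepsilon$ agree only as hyperplanes, while their inward normals may differ by a sign (the halfspaces $H_-(u_i,a_i)$ and $H_-(u_i',a_i')$ are oriented differently). This is harmless: the orthogonal projection onto a hyperplane $H(u,a)$, and therefore its $K$-isotonicity, depends on the normal $u$ only through the rank-one term $\lng u,\cdot\rng\,u/\|u\|^2$, which is invariant under $u\mapsto-u$. Hence the sign of the facet normals is irrelevant and ``$K$-isotonicity of a tangent hyperplane'' is genuinely a property of the hyperplane itself, which is what legitimizes transferring it from $K$ to $K_\varepsilon$.
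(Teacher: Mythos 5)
Your proof is correct and follows essentially the same route as the paper's: the paper's own one-sentence argument likewise observes that $K$ is $K$-isotone, that the tangent hyperplanes of $K_\varepsilon$ coincide with those of $K$, and then concludes by Theorem \ref{elso}, exactly as in the proof of Proposition \ref{pepsi}. Your extra remarks (that isotone projection cones are simplicial, so $K_\varepsilon$ is well defined, and that $K$-isotonicity of a tangent hyperplane is invariant under the sign of its normal) only make explicit details the paper leaves implicit.
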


\begin{proof}
	Since $K$ is a $K$-isotone projection set and the tangent hyperplanes of $K_\varepsilon$ coincide with the tangent hyperplanes of $K$,
	from Theorem \ref{elso} it 
	follows that $K_\varepsilon$ is also a $K$-isotone projection set.
\end{proof}

\begin{remark}
From this proposition it follows that for $K$ an isotone projection cone 
each member of the family $\{K_\varepsilon: \varepsilon \in \mc E\}$	is a $K$-isotone
simplicial cone. Obviously, $\intr (K_\varepsilon) \cap K=\varnothing$ whenever $\diag \varepsilon$
is not the identity matrix. Hence by Theorem \ref{pc} in this case we must also have 
$$ \intr(K^*_\varepsilon) \cap K=\varnothing,\;\; \textrm{and}\;\; \intr (K^*_\varepsilon) \cap K^*=\varnothing.$$
\end{remark}


\section{The case of $\R^m_+$-isotone projection cones}

To show that in contrast with Corollary \ref{ellpeld} there exists a large
class of cones which can be $\R^m_+$-isotone projection cones or more
general polyhedral cones for which there are order relations
with respect to which they admit isotone projections, we cite
Theorem 3 in \cite{NemethNemeth2016} (see \cite{NemethNemeth2016} for the definition of a facet):

\begin{theorem}\label{orthisosubcone}

If $K$ is a generating closed convex
cone in  $\R^m$, then it is $\R^m_+$-isotone,
if and only if it is a polyhedral cone of the form
\begin{equation}\label{k1}
K= \cap_{k<l} (H_-(a_{kl1},0)\cap H_-(a_{kl2},0)), \;\;k,\,l\in \{1,\dots,m\}
\end{equation}
where $a_{kli}$ are nonzero vectors with $a_{kli}^ka_{kli}^l\leq 0$ and $a_{kli}^j =0$ for $j\notin \{k,l\},\;i=1,2.$
Hence $K$ possesses at most $m(m-1)$ facets. There exists a cone $K$ of the above form with exactly $m(m-1)$ facets.

\end{theorem}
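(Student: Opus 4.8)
The plan is to reduce the problem, via Theorem \ref{elso}, to understanding which tangent hyperplanes of $K$ are $\R^m_+$-isotone, and then to assemble these local pieces into the global polyhedral description. Note first that a generating closed convex cone is full-dimensional and therefore has nonempty interior, so Theorem \ref{elso} applies in both directions. Since every supporting hyperplane of a cone passes through the origin, the tangent hyperplanes occurring in a representation $K=\cap_{i\in\N}H_-(u_i,0)$ have the form $H(u_i,0)$, and the metric projection onto such a hyperplane is the \emph{linear} map $I-\|u_i\|^{-2}u_iu_i^\top$.

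The key lemma I would isolate is that a hyperplane $H(u,0)$ is $\R^m_+$-isotone if and only if $u$ is supported on at most two coordinates, say positions $k,l$, with $u^ku^l\le 0$. Because the projection $P_H$ is linear and $\leq_{\R^m_+}$ is the coordinatewise ordering, $H(u,0)$ is $\R^m_+$-isotone exactly when $P_H$ maps $\R^m_+$ into itself, i.e.\ when the symmetric matrix $I-\|u\|^{-2}uu^\top$ is entrywise nonnegative. Its diagonal entries $1-(u^i)^2/\|u\|^2$ are automatically nonnegative, while the off-diagonal entries $-u^iu^j/\|u\|^2$ are nonnegative precisely when $u^iu^j\le 0$ for all $i\neq j$. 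A pigeonhole argument on signs then shows that three nonzero coordinates cannot have pairwise nonpositive products, forcing $u$ to be supported on at most two coordinates and fixing the sign condition there.

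For necessity I would write $K=\cap_{i\in\N}H_-(u_i,0)$ with each $H(u_i,0)$ tangent and $\R^m_+$-isotone, so by the lemma each $u_i$ is supported on a pair $\{k,l\}$ with $u_i^ku_i^l\le 0$. Grouping the halfspaces by their support pair gives $K=\cap_{k<l}\tilde K_{kl}$, where $\tilde K_{kl}=\{x:(x^k,x^l)\in C_{kl}\}$ and $C_{kl}\subset\R^2$ is the closed convex cone cut out by the $\{k,l\}$-supported halfspaces. Because $K$ is full-dimensional, each $C_{kl}$ is a two-dimensional closed convex cone, hence is bounded by at most two extreme rays and described by at most two halfspaces; their normals, being extreme directions of $\overline{\cone}\{u_i\}$ and thus limits of admissible normals, still satisfy the closed sign condition $u^ku^l\le 0$. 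This produces the representation (\ref{k1}) with at most two facets per pair, hence at most $m(m-1)$ facets, and in particular shows $K$ is polyhedral.

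For sufficiency I would reverse the lemma: each bounding hyperplane in (\ref{k1}) has a normal supported on two coordinates with the sign condition, hence is $\R^m_+$-isotone; discarding redundant halfspaces leaves the facet (tangent) hyperplanes, and Theorem \ref{elso} then gives that $K$ is $\R^m_+$-isotone. Finally, the sharpness claim requires exhibiting a cone attaining $m(m-1)$ facets, for which I would, for each pair $\{k,l\}$, choose two normals in the open second and fourth quadrants of the $(x^k,x^l)$-plane in generic position so that all $m(m-1)$ inequalities are irredundant. The main obstacle I anticipate is the necessity direction: converting the a priori countable intersection into a polyhedron with the exact facet count, and especially verifying that the two selected bounding normals of each $C_{kl}$ retain the sign condition $u^ku^l\le 0$ — this closedness and extremality point is where the argument must be carried out with care.
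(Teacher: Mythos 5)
A preliminary point: the paper itself does not prove Theorem \ref{orthisosubcone}; it quotes the result from \cite{NemethNemeth2016} expressly to avoid reproducing that argument, so there is no internal proof to compare yours against. Judged on its own terms, your proof takes exactly the route the paper's machinery is built for, and its core is correct: tangent hyperplanes of a cone pass through the origin; $P_{H(u,0)}=I-\|u\|^{-2}uu^\top$ is linear, so $H(u,0)$ is $\R^m_+$-isotone precisely when this matrix is entrywise nonnegative; the diagonal entries are automatic, the off-diagonal ones give $u^iu^j\le 0$ for $i\ne j$, and the sign pigeonhole forces $u$ to be supported on at most two coordinates with nonpositive product. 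The grouping of the countably many tangent halfspaces by support pair, the reduction to full-dimensional planar cones $C_{kl}$ (each cut out by at most two halfplanes whose normals, being limit rays of admissible normals, inherit the closed condition $u^ku^l\le0$), and the sufficiency direction via facet hyperplanes and Theorem \ref{elso} are all sound; this yields polyhedrality and the bound of $m(m-1)$ facets.

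Two gaps need repair. First, the existence of a cone with exactly $m(m-1)$ facets does not follow from choosing normals ``in generic position'': choices made independently for different pairs can be incompatible, collapsing the cone. For instance, in $\R^3$ the normals $(1,-\tfrac12),(-\tfrac12,1)$ for the pair $\{1,2\}$ together with $(-1,\epsilon),(\epsilon',-1)$ for the pair $\{1,3\}$ (all in the open second and fourth quadrants) force $K=\{0\}$ when $\epsilon\epsilon'<1$. You need a coherent witness, e.g. $K=\{x\in\R^m:\ x^k\le tx^l\ \text{for all}\ k\ne l\}$ with $t>1$: the normals $e_k-te_l$ satisfy the required conditions, $(1,\dots,1)$ is an interior point, and for each ordered pair $(k,l)$ the point with $x^l=1$, $x^k=t$ and all remaining coordinates equal to some $s\in(1,t)$ satisfies that one constraint with equality and all others strictly, so every one of the $m(m-1)$ constraints is facet-defining. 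Second, your necessity argument silently assumes that every pair $\{k,l\}$ contributes at least one tangent halfspace. If some pair contributes none --- e.g. $K=\{x\in\R^m:\,x^1\ge0\}$ with $m\ge3$, which is a generating, $\R^m_+$-isotone closed convex cone --- then $K$ cannot be written literally in the form (\ref{k1}) with nonzero $a_{kli}$ for every $k<l$, because no nonzero normal supported on $\{2,3\}$ gives a halfspace containing $K$. This is really a rough edge of the statement as quoted rather than of your strategy, but a complete write-up must say how such pairs are handled; the defensible conclusion is that $K$ is polyhedral and each of its facet normals is supported on some pair $\{k,l\}$ with the sign condition, whence at most $m(m-1)$ facets.
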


We remark that in this theorem the cone $K$ may be a proper or only a closed and convex generating cone.

\begin{remark}

As a family of simplicial subcones contained in $\R^m_+$ which are
$\R^m_+$-isotone we mention the family of the so called istonic regression 
cones, among which the single cone which is itself an isotone projection cone too is the
monotone nonnegative cone (see Corollary 1 and 2 in \cite{NemethNemeth2016} and the definitions therein).

\end{remark}

\begin{corollary}

If $K$ is an $\R^m_+$-isotone proper cone, then exactly one of the alternatives
\begin{enumerate}
\item $K\subset \R^m_+$,
\item $\intr(K^*)\cap \R^m_+=\varnothing$
\end{enumerate}
holds.
\end{corollary}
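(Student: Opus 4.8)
The plan is to reduce everything to Theorem \ref{pc}, exploiting the self-duality of the nonnegative orthant. Since $\R^m_+$ is self-dual, taking $L=\R^m_+$ forces $L=L^*$, so the hypothesis ``$\intr(K^*)\cap L$ or $\intr(K^*)\cap L^*$ is nonempty'' in Theorem \ref{pc} collapses to the single condition $\intr(K^*)\cap\R^m_+\ne\varnothing$. Because the statement asserts that \emph{exactly} one alternative holds, I would split the argument into two independent parts: first that at least one alternative always holds, and second that the two alternatives are mutually exclusive.

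For the \emph{at least one holds} part, I would simply distinguish the two cases according to whether $\intr(K^*)\cap\R^m_+$ is empty. If it is empty, then alternative 2 holds by definition, and there is nothing further to prove. If it is nonempty, then since $K$ is an $\R^m_+$-isotone projection set, both $K$ and $\R^m_+$ are proper cones, and $(\R^m_+)^*=\R^m_+$, Theorem \ref{pc} applies verbatim with $L=\R^m_+$ and yields $K\subset\R^m_+$, which is alternative 1. Thus one of the two alternatives is always present.

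For the \emph{mutual exclusivity} part, I would show that alternative 1 forces the negation of alternative 2. Assume $K\subset\R^m_+$. Passing to duals reverses the inclusion, so $\R^m_+=(\R^m_+)^*\subset K^*$. Since the interior operation is monotone under set inclusion, $\intr(\R^m_+)\subset\intr(K^*)$; and as $\intr(\R^m_+)$ is nonempty (being the interior of a proper, hence full-dimensional, cone) and contained in $\R^m_+$, I obtain $\varnothing\ne\intr(\R^m_+)\subset\intr(K^*)\cap\R^m_+$. Hence alternative 2 fails whenever alternative 1 holds, so at most one alternative can occur.

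Combining the two parts yields that exactly one alternative holds. The only step requiring care is the mutual-exclusivity argument, where the duality reversal $K\subset\R^m_+\Rightarrow\R^m_+\subset K^*$ together with the monotonicity of the interior must be invoked correctly; the ``at least one'' direction is an essentially immediate reading of Theorem \ref{pc} once the self-duality $L=L^*=\R^m_+$ is noted. I therefore do not anticipate a genuine obstacle here, since all the substantive work is already carried by Theorem \ref{pc} and by the self-duality of $\R^m_+$.
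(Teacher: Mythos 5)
Your proposal is correct and follows essentially the same route as the paper: the paper likewise derives mutual exclusivity from $K\subset\R^m_+\Rightarrow\R^m_+=(\R^m_+)^*\subset K^*$ and obtains the ``at least one'' direction by applying Theorem \ref{pc} with $L=\R^m_+$ self-dual. Your write-up merely makes explicit the interior-monotonicity step that the paper leaves implicit.
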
 

\begin{proof}
If item 1 holds, then $$\R^m_+=(\R^m_+)^*\subset K^*,$$ and hence item 2 does not hold. If item 2 does not hold, then by Theorem \ref{pc} item 1
holds.  
\end{proof}

\bibliographystyle{abbrv}
\bibliography{ordiso}

\end{document}